\definecolor{britishracinggreen}{rgb}{0.0, 0.26, 0.15}
\definecolor{amaranth}{rgb}{0.9, 0.17, 0.31}
\newtheorem{thm}{Theorem}
\newtheorem{lem}[thm]{Lemma}
\newtheorem{prop}[thm]{Proposition}
\newtheorem{remark}{Remark}
\theoremstyle{definition}
\newtheorem{defn}[thm]{Definition}
\newtheorem{prob}{Problem}
\newcommand{\sss}[1]{{\scriptscriptstyle #1}}
\newcommand\restr[2]{{% we make the whole thing an ordinary symbol
		\left.\kern-\nulldelimiterspace % automatically resize the bar with \right
		#1 % the function
		\vphantom{\big|} % pretend it's a little taller at normal size
		\right|_{#2} % this is the delimiter
}}
\newcommand{\mynorm}[1]{ \left\| #1 \right\| }
\begin{document}

	\title{Extending surjective maps preserving the norm of symmetric Kubo-Ando means}
	
	\author{Emmanuel Chetcuti}
	\address{
		Emmanuel Chetcuti,
		Department of Mathematics\\
		Faculty of Science\\
		University of Malta\\
		Msida MSD 2080  Malta} \email {emanuel.chetcuti@um.edu.mt}
	
	\author{Curt Healey}
	\address{Curt Healey\\
		Department of Mathematics\\
		Faculty of Science\\
		University of Malta\\
		Msida MSD 2080  Malta}
	\email{curt.c.healey.13@um.edu.mt}

	\date{\today}
	\begin{abstract}
		In  \cite{dong-molnar-wong}, the authors addressed the question of whether surjective maps preserving the norm of a symmetric Kubo-Ando mean can be extended to Jordan $\ast$-isomorphisms. The question was affirmatively answered for surjective maps  between $C^{*}$-algebras for certain specific classes of symmetric Kubo-Ando means. Here, we give a comprehensive answer to this question for surjective maps between $AW^{*}$-algebras  preserving the norm of any symmetric Kubo-Ando mean.
	\end{abstract}
	\subjclass[2000]{Primary 47A64, 47B49, 46L40}
	\keywords{Kubo-Ando connection, $C^{*}$-algebra,  $AW^{*}$-algebra, Positive definite cone, Order, Preservers}
	\maketitle

	\section{Introduction}
	
	Recently, in \cite{dong-molnar-wong}, considerable attention was given to the 
problem of characterizing those maps between the positive definite cones of unital $C^{*}$-algebras that preserve the norm of a given Kubo-Ando mean.  We recall that a binary operation $\sigma$ on the positive definite cone $\mathcal B(H)^{++}$ of the algebra $\mathcal B(H)$ of bounded operators on the Hilbert space $H$,  is called a \emph{Kubo-Ando connection} if it satisfies the following properties:
	\begin{enumerate}[{\rm(i)}]
		\item If $A \leq C$ and $B \leq D$, then $ A \sigma B \leq C \sigma D $.
		\item $ C (A \sigma B) C \leq (CAC) \sigma (CBC) $.
		\item  If $ A_n  \downarrow A$ and $ B_n \downarrow B $, then $ A_n \sigma B_n \downarrow A \sigma B$\footnote{For a sequence $(X_n)$ of self-adjoint operators in $\mathcal B(H)$, we write $X_n\downarrow X$ when $(X_n)$ is monotonic decreasing and SOT-convergent to $X$.  The symbol $X_n\uparrow X$ is defined dually.}.
	\end{enumerate}
	A \emph{Kubo-Ando mean} is a Kubo-Ando connection with the normalization condition $I\sigma I=I$.  The most fundamental connections are:
	\begin{itemize}
		\item  the \emph{sum} $(A,B)\mapsto A+B$,
		\item the \emph{parallel sum} $(A,B)\mapsto A:B=\left(A^{-1}+B^{-1}\right)^{-1}$,
		\item the \emph{geometric mean}
		\[(A,B)\mapsto A\sharp B=A^{\sss{\frac{1}{2}}}\,\left(A^{\sss{-\frac{1}{2}}}BA^{\sss{-\frac{1}{2}}}\right)^{\sss{\frac{1}{2}}}A^{\sss{\frac{1}{2}}}.\]
	\end{itemize}
		
	The domain of definition can easily be extended from $\mathcal B(H)^{++}$ to the positive semi-definite cone $\mathcal B(H)^{+}$. For details, refer to the introduction section in \cite{curt-emanuel-first-paper}.

	 In \cite[Theorem  3.2]{kubo-ando-main}, it is shown that there is an affine order isomorphism from the class of Kubo-Ando connections onto the class of operator monotone functions via the map $f(xI) = I \sigma (xI)$ for $x>0$. Moreover, it is also shown that $f(A)=I\sigma A$ for every $A\in\mathcal B(H)^{+}$, which implies that
	\[A\sigma B=A^{\sss{\frac{1}{2}}}\,f\left(A^{\sss{-\frac{1}{2}}}BA^{\sss{-\frac{1}{2}}}\right)A^{\sss{\frac{1}{2}}}, \quad  \forall \, A \in  \mathcal B(H)^{++}, B \in \mathcal B(H)^{+}.  \]
	The function $f$ is called the \emph{representing function} of $\sigma$.  We further recall that if $\sigma$ is a Kubo-Ando connection with representing function $f$, then the representing function of the `reversed' Kubo-Ando connection $(A,B)\mapsto B\sigma A$ is the transpose $f^\circ$,  defined by $f^\circ(x):=xf(x^{-1})$. The Kubo-Ando connection is said to be symmetric if it coincides with its reverse; that is, a Kubo-Ando connection is symmetric if and only if the representing function $f$ satisfies $f=f^\circ$ as shown in \cite[Corollary 4.2]{kubo-ando-main}. The Kubo-Ando means are precisely the Kubo-Ando connections whose representing function satisfy the normalizing condition $f(1)=1$.

	We recall that operator monotone functions correspond to positive finite Borel measures on $[0,\infty]$\footnote{We recall that every finite Borel measure on $[0,\infty]$ is regular, i.e. a Radon measure.} by L\"owner's Theorem (see \cite{MR0486556}): To every  operator monotone function $f$ corresponds a unique positive and finite Borel measure $m$ on $[0,\infty]$ such that
	\begin{equation}\label{e4}
		f(x)=\,\int_{[0,\infty]}\frac{x(1+t)}{x+t}\,{\rm d}m(t)\,=\,m(\{0\})\,+\,x\,m(\{\infty\})\,+\,\int_{(0,\infty)}\frac{1+t}{t}(t:x)\,{\rm d}m(t)\quad(x>0).
	\end{equation}
	where $t:x = 2^{-1} (t \,! \, x)$. It is easy to see that $f(0+)=m(\{0\})$, $f^\circ(0+)=m(\{\infty\})$.
	Finally, by \cite[Theorem 3.4]{kubo-ando-main}, there is an affine isomorphism from the class of positive finite Borel measures on $[0,\infty]$ onto the class of Kubo-Ando connections. This is given by the formula
	\begin{equation}\label{kubo-ando-integral-representation}
		A \sigma B = \alpha A +  \beta B + \int_{(0, \infty)} \frac{1+t}{t} (tA:B) \, {\rm d} m(t) \quad A, B \in  \mathcal B(H)^{+}
	\end{equation}
	where $\alpha = m(\lbrace0\rbrace)$ and $\beta =m(\lbrace \infty \rbrace)$.
	In the case of a symmetric Kubo-Ando connection, then $ \alpha = \beta $. For further details on the provenance of the integral representation (\ref{kubo-ando-integral-representation}), the reader is referred to \cite[Theorem 3.2]{kubo-ando-main}.  
	
	After the exposition on general Kubo-Ando means, we can now define the property under study for symmetric Kubo-Ando means.
	
	\begin{defn}\label{phi-definition}
		Let $\sigma$ be a Kubo-Ando mean and let  $\mathcal{A}$ and $\mathcal{B}$ be unital $C^{*}$-subalgebras of $\mathcal B(H)$.  A surjective map $\phi$ between the positive definite cones of $\mathcal{A}$ and $\mathcal{B}$ is said to preserve the norm of  $\sigma$ if
		\[  \mynorm{A \sigma B} = \mynorm{\phi(A) \sigma \phi(B) }, \quad \forall \,  A, B \in \mathcal{A}^{++}.
		\]
	\end{defn}
	
	 A natural question to ask is whether a surjective map $\phi : \mathcal{A}^{++} \rightarrow \mathcal{B}^{++}$ preserving the norm of a symmetric Kubo-Ando mean is an order isomorphism.  By an \emph{order isomorphism}, we mean a  map $\phi$ such that $A \leq B \iff  \phi(A) \leq \phi(B)$ where $A, B \in \mathcal{A}^{++}$. By \cite[Theorem 6]{curt-emanuel-first-paper}, we can easily show this: let $A, B \in \mathcal{A}^{++}$, then
	\begin{equation}\label{phi-order-isomorphism}
		\begin{split}
			A \leq B  &\iff \mynorm{A \sigma X} \leq \mynorm{B \sigma X},  \quad \forall \; X \in \mathcal{A}^{++}  \\
			&\iff \mynorm{\phi(A) \sigma \phi(X)} \leq \mynorm{\phi(B) \sigma \phi(X)},  \quad \forall \; X \in \mathcal{A}^{++} \\
			&\iff \phi(A) \leq \phi(B).
		\end{split}
	\end{equation}	
	Moreover, $\phi$ is norm preserving. This can be seen by recalling \cite[Theorem 3.3]{kubo-ando-main} and noting that	
	\begin{equation}\label{phi-norm-preserving}
		\mynorm{A} = \mynorm{A \sigma A} = \mynorm{\phi(A) \sigma \phi(A) } = \mynorm{\phi(A)}, \quad \forall \,  A \in \mathcal{A}^{++}.
	\end{equation}
	
	The question we now tackle is whether a surjective map preserving the norm of a symmetric Kubo-Ando mean extends to a Jordan $\ast$-isomorphism. Let us recall that a \emph{Jordan $\ast$-isomorphism}  is a bijective linear map $J: \mathcal{A} \rightarrow \mathcal{B}$ such that $J(A^{*}) = J(A)^{*}$ and $J(AB + BA) = J(A)J(B) + J(B)J(A)$ for $A,B \in \mathcal{A}$. The problem under study has been stated explicitly in the open problem section of \cite{molnar-od}, and we reformulate it here to its most general form.
	\begin{prob}
		Do surjective maps between positive cones of unital $C^{*}$-algebras that preserve the norm of a symmetric Kubo-Ando mean extend to Jordan $\ast$-isomorphisms?
	\end{prob}
	
 The above problem has been solved for the arithmetic mean \cite[Theorem 2.4]{dong-molnar-wong} and the geometric mean \cite[Theorem 1]{chabbabi-main}, but for the harmonic mean, it has been solved in the case of $AW^{*}$-algebras \cite[Theorem 2.16]{dong-molnar-wong}. Our aim is to provide a complete answer for  to the above problem for general symmetric means in the setting of $AW^{*}$-algebras.

\section{Preliminary Considerations}
	Let us first recall \cite[Lemma 2.3]{dong-molnar-wong} and provide the proof here for completeness' sake.
	\begin{lem}\label{extending-phi-to-semi-definite-cone}
		Let $\mathcal{A}, \mathcal{B}$ be unital $C^{*}$-algebras and  $\phi: \mathcal{A}^{++} \rightarrow \mathcal{B}^{++}$ a surjective norm preserving order isomorphism, then $\phi(tI) = tI$ for $t > 0$.
	\end{lem}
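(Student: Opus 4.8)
The plan is to give a purely order-theoretic characterization of the scalar $tI$ inside the positive definite cone, one that is visibly invariant under any norm-preserving order isomorphism, and then simply transport it through $\phi$.

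First I would record the elementary spectral fact, valid in any unital $C^{*}$-algebra, that for a positive definite element $Y$ one has $\mynorm{Y}\le t$ if and only if $Y\le tI$: since $Y$ is self-adjoint and positive, $\mynorm{Y}$ is the largest point of its spectrum, and $tI-Y\ge 0$ holds precisely when that largest point does not exceed $t$. Consequently, for each fixed $t>0$, the element $tI$ is the \emph{greatest element} of the set
\[
S_t^{\mathcal{A}}:=\set{Y\in\mathcal{A}^{++}}{\mynorm{Y}\le t},
\]
meaning that $tI\in S_t^{\mathcal{A}}$ (as $tI$ is positive definite with norm $t$) and $Y\le tI$ for every $Y\in S_t^{\mathcal{A}}$. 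Exactly the same statement holds in $\mathcal{B}$ for the analogous set $S_t^{\mathcal{B}}$.

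Next I would check that $\phi$ carries $S_t^{\mathcal{A}}$ bijectively onto $S_t^{\mathcal{B}}$. If $Y\in S_t^{\mathcal{A}}$ then $\mynorm{\phi(Y)}=\mynorm{Y}\le t$ by norm preservation, so $\phi(Y)\in S_t^{\mathcal{B}}$; conversely, given $Z\in S_t^{\mathcal{B}}$, surjectivity provides $Y$ with $Z=\phi(Y)$, and then $\mynorm{Y}=\mynorm{\phi(Y)}=\mynorm{Z}\le t$, so $Y\in S_t^{\mathcal{A}}$. Thus $\phi$ restricts to a bijection $S_t^{\mathcal{A}}\to S_t^{\mathcal{B}}$ that preserves the order in both directions.

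Finally I would invoke the fact that order isomorphisms preserve greatest elements. From $Y\le tI$ for all $Y\in S_t^{\mathcal{A}}$ we get $\phi(Y)\le\phi(tI)$ for all such $Y$; since every member of $S_t^{\mathcal{B}}$ is of the form $\phi(Y)$, the element $\phi(tI)\in S_t^{\mathcal{B}}$ dominates all of $S_t^{\mathcal{B}}$, hence it \emph{is} the greatest element of $S_t^{\mathcal{B}}$. But $tI$ is already the greatest element of $S_t^{\mathcal{B}}$, and greatest elements are unique, so $\phi(tI)=tI$. There is no serious obstacle in this argument: its whole content is the description of $tI$ as the top of a norm ball, plus the bookkeeping that norm preservation together with surjectivity transports these balls onto one another. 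The only point needing the slightest care is to apply the equivalence $\mynorm{Y}\le t\iff Y\le tI$ within the positive definite cone, which is automatic because $tI$ is itself positive definite.
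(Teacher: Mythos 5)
Your proposal is correct and is essentially the paper's own argument, merely repackaged: the paper shows $\phi(tI)\le tI$ from $\mynorm{\phi(tI)}=t$ and then uses surjectivity to find $A$ with $\phi(A)=tI$, $A\le tI$, hence $tI\le\phi(tI)$, which is exactly your observation that $tI$ is the greatest element of the norm ball $S_t$ and that this property is transported by a surjective norm-preserving order isomorphism. No gap; the two proofs use the same spectral fact $\mynorm{Y}\le t\iff Y\le tI$ and the same bookkeeping.
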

	
	\begin{proof}
		Given that $ \mynorm{\phi(tI)} = \mynorm{tI} = t$, it follows that $\phi(tI) \leq tI$. Furthermore, $\exists \; A \in \mathcal{A}^{++}$ such that $\phi(A) = tI$. Since $  \mynorm{A} = \mynorm{\phi(A)} = t$, it implies that  $A \leq tI$. Hence, we have $\phi(A) \leq \phi(tI)$ which proves the lemma.
	\end{proof}

 Moreover, by \cite[Lemma 2.8]{dong-molnar-wong},  $\psi_{\epsilon}(A) = \phi(A+ \epsilon I) - \epsilon I$ is a surjective norm preserving order isomorphism between the positive semi-definite cones of $\mathcal{A}$ and $\mathcal{B}$ for some $\epsilon > 0$ such that $\psi_{\epsilon}(tI) = tI$ for all $t > 0$. Let us provide the proof here for completeness' sake. 
 
 \begin{lem}\label{psi-lemma}
 			Let $\mathcal{A}, \mathcal{B}$ be unital $C^{*}$-algebras and  $\phi: \mathcal{A}^{++} \rightarrow \mathcal{B}^{++}$ a surjective norm preserving order isomorphism such that $\phi(tI) = tI$ for all $t > 0$. Define $\psi_{\epsilon}: \mathcal{A}^{+} \rightarrow \mathcal{B}^{+}$ by $\psi_{\epsilon}(A) = \phi(A + \epsilon I) - \epsilon I$ where $\epsilon > 0$, then $\psi_{\epsilon}$ is a surjective norm preserving order isomorphism such that $\psi_{\epsilon}(tI) = tI$ for all $t > 0$.
 \end{lem}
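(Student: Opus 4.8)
The plan is to verify the four asserted properties of $\psi_{\epsilon}$ one at a time, the crucial input throughout being the normalization $\phi(tI)=tI$ combined with the fact that $\phi$ is an order isomorphism. First I would check that $\psi_{\epsilon}$ is well defined as a map $\mathcal{A}^{+}\to\mathcal{B}^{+}$. For $A\in\mathcal{A}^{+}$ the operator $A+\epsilon I$ has spectrum in $[\epsilon,\mynorm{A}+\epsilon]$, hence lies in $\mathcal{A}^{++}$, so $\phi(A+\epsilon I)$ is defined and belongs to $\mathcal{B}^{++}$. Since $A+\epsilon I\geq \epsilon I$ and $\phi$ preserves order, $\phi(A+\epsilon I)\geq\phi(\epsilon I)=\epsilon I$, so $\psi_{\epsilon}(A)=\phi(A+\epsilon I)-\epsilon I\geq 0$, i.e. $\psi_{\epsilon}(A)\in\mathcal{B}^{+}$.

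The order-isomorphism property then follows by chaining three order equivalences: for $A,A'\in\mathcal{A}^{+}$ one has $A\leq A'\iff A+\epsilon I\leq A'+\epsilon I\iff\phi(A+\epsilon I)\leq\phi(A'+\epsilon I)\iff\psi_{\epsilon}(A)\leq\psi_{\epsilon}(A')$, where the middle step is the order-isomorphism property of $\phi$ on $\mathcal{A}^{++}$ and the outer steps are the affine shift by $\epsilon I$. Surjectivity is dual: given $B\in\mathcal{B}^{+}$, the operator $B+\epsilon I$ lies in $\mathcal{B}^{++}$, so by surjectivity of $\phi$ there is $C\in\mathcal{A}^{++}$ with $\phi(C)=B+\epsilon I\geq\epsilon I=\phi(\epsilon I)$; the reverse order implication for $\phi$ gives $C\geq\epsilon I$, whence $A:=C-\epsilon I\in\mathcal{A}^{+}$ and $\psi_{\epsilon}(A)=\phi(C)-\epsilon I=B$. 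The normalization is immediate, since $\psi_{\epsilon}(tI)=\phi\bigl((t+\epsilon)I\bigr)-\epsilon I=(t+\epsilon)I-\epsilon I=tI$.

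The one step requiring more than formal manipulation, and the part I would flag as the main point, is norm preservation, which rests on a spectral-shift argument. Writing $B=\phi(A+\epsilon I)$, the norm-preservation of $\phi$ gives $\mynorm{B}=\mynorm{A+\epsilon I}=\mynorm{A}+\epsilon$, the last equality because $A$ is positive semi-definite with spectrum in $[0,\mynorm{A}]$, so the shift moves the top of the spectrum to $\mynorm{A}+\epsilon$. The subtlety is that to read off $\mynorm{B-\epsilon I}$ I must know that $B-\epsilon I$ is positive, which is precisely the inequality $B\geq\epsilon I$ already used for well-definedness; given this, $B-\epsilon I\geq 0$ has spectrum contained in $[0,\mynorm{B}-\epsilon]$, so that $\mynorm{\psi_{\epsilon}(A)}=\mynorm{B-\epsilon I}=\mynorm{B}-\epsilon=\mynorm{A}$. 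Thus the hypothesis $\phi(\epsilon I)=\epsilon I$ is exactly what makes both the well-definedness and the norm computation go through, and beyond bookkeeping with these spectral shifts there is no serious obstacle.
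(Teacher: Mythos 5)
Your proof is correct and follows essentially the same route as the paper's: the order-isomorphism property by chaining the shift and $\phi$-equivalences, surjectivity by pulling back $B+\epsilon I$ and using $\phi(C)\geq\epsilon I\Rightarrow C\geq\epsilon I$, and norm preservation via the spectral shift. You actually justify the step $\mynorm{\phi(A+\epsilon I)-\epsilon I}=\mynorm{\phi(A+\epsilon I)}-\epsilon$ (which requires positivity of $\phi(A+\epsilon I)-\epsilon I$) more carefully than the paper does.
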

 
 \begin{proof}
 	Let $A, B \in \mathcal{A}^{+}$, and $\epsilon > 0$, then
		\begin{equation*}
		\begin{split}
			A \leq B  &\iff \phi(A + \epsilon I) \leq \phi(B + \epsilon I) \\
			&\iff \phi(A + \epsilon I) - \epsilon I \leq \phi(B + \epsilon I) - \epsilon I \\
			&\iff \psi_{\epsilon}(A) \leq \psi_{\epsilon}(B)
		\end{split}
	\end{equation*}	
	which implies that $\psi_\epsilon$ is an order isomorphism. We next show that $\psi_{\epsilon}$ is surjective. Let $B \in \mathcal{B}^{+}$, then there is some $A \in \mathcal{A}^{++}$ such that $\phi(A) = B + \epsilon I$. Since $\phi(A) \geq \epsilon I$, then $A \geq \epsilon I$, which allows us to conclude that  $\psi_{\epsilon} (A - \epsilon I) = \phi(A) - \epsilon I = B $. Therefore, $\psi_{\epsilon}$ is surjective. 
	Since $\mynorm{\psi_{\epsilon}(A)} = \mynorm{\phi(A + \epsilon I)  - \epsilon I} = \mynorm{\phi(A + \epsilon I)} - \epsilon I = \mynorm{A + \epsilon I} - \epsilon I = \mynorm{A}$, then $\psi_{\epsilon}$ is norm preserving. Finally, $\psi(tI) = \phi( (t + \epsilon) I) - tI = (t + \epsilon) I - \epsilon I = tI$.
	
 \end{proof}

	\par 
		Before recalling \cite[Lemma 3.1]{mori}, let us provide some definitions.  For a unital $C^{*}$-algebra $\mathcal{A}$ we define  $E_t(\mathcal{A}) := \lbrace 0 \leq A \leq tI \rbrace$, where $t$ is a positive real number.  We recall $E_1(\mathcal{A})$ is called the effect algebra associated with $\mathcal A$. Let 
		\[ \Delta_t(\mathcal{A}) := \lbrace (a, b) \in E_t(\mathcal{A}) \times E_t(\mathcal{A}):  a \wedge b = 0 \rbrace\]
		where $a \wedge b$ denotes the infimum of $a, b$ in $\mathcal{A}^{+}$. Finally, denote the set of projections of $\mathcal{A}$ by $P(\mathcal{A})$.  We recall that for any non-empty family of projections in $\mathcal{A}$, if the infimum in $P(\mathcal{A})$ exists, then this will be also the infimum of the family, taken in 
$\mathcal{A}^{+}$.
	\begin{lem}\label{mori-lemma}
		Let $\mathcal{A}$ be a unital $C^{*}$-algebra. Endow $\Delta_1(\mathcal{A})$ with an order relation such that $(a_1, a_2) \leq (b_1, b_2) \iff a_1 \leq b_1$ and $b_2 \leq a_2$, where $(a_1, a_2), (b_1, b_2) \in \Delta_1(\mathcal{A})$. Then the following conditions are equivalent:
		\begin{enumerate}[{\rm(1)}]
			\item $P$ is a projection. 
			\item The pair $(P, B)$ is maximal in $\Delta_1(\mathcal{A})$ for some $B \in E_1(\mathcal{A})$. 
		\end{enumerate}
	\end{lem}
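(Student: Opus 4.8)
The plan is to prove the two implications separately, in each case reducing the argument to one elementary principle: \emph{if $a \wedge b = 0$ in $\mathcal{A}^{+}$ and $Y \in \mathcal{A}^{+}$ satisfies $Y \le a$ and $Y \le b$, then $Y$ is a common lower bound of $a$ and $b$, hence $Y \le a \wedge b = 0$ and so $Y = 0$}. Both directions are then obtained by manufacturing suitable common lower bounds.

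For the implication $(1) \Rightarrow (2)$, I would take $B = I - P$. First one checks that $(P, I-P) \in \Delta_1(\mathcal{A})$: as $P$ and $I-P$ are complementary projections their infimum in $P(\mathcal{A})$ is $0$, and by the remark preceding the lemma this is also their infimum in $\mathcal{A}^{+}$. To verify maximality, suppose $(P, I-P) \le (a_1, a_2)$ with $(a_1, a_2) \in \Delta_1(\mathcal{A})$, so that $P \le a_1 \le I$, $I - P \le a_2 \le I$ and $a_1 \wedge a_2 = 0$. Put $Y := a_2 - (I-P) \ge 0$; then $Y \le a_2$ (since $I - P \ge 0$) and $Y \le P \le a_1$ (since $a_2 \le I$), so the principle gives $Y = 0$, i.e.\ $a_2 = I - P$. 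Symmetrically, $Z := a_1 - P \ge 0$ satisfies $Z \le a_1$ and $Z \le I - P \le a_2$, whence $Z = 0$ and $a_1 = P$. Thus $(a_1, a_2) = (P, I-P)$ and the pair is maximal.

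For the implication $(2) \Rightarrow (1)$, which is the substantive direction, the idea is to enlarge $P$ without destroying disjointness from $B$. Given that $(P, B)$ is maximal, I would consider
\[ a_1 := 2P - P^{2} = I - (I-P)^{2}. \]
Since $I - (2P - P^{2}) = (I-P)^{2} \ge 0$ and $2P - P^{2}$ is a product of the commuting positive elements $P$ and $2I - P$, we have $a_1 \in E_1(\mathcal{A})$; moreover $a_1 - P = P(I-P) \ge 0$, so $P \le a_1$, and $a_1 = 2P - P^{2} \le 2P$ because $P^{2} \ge 0$. To see that $a_1 \wedge B = 0$, let $X \in \mathcal{A}^{+}$ with $X \le a_1$ and $X \le B$. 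From $a_1 \le 2P$ we get $\tfrac12 X \le P$, and from $X \le B$ we get $\tfrac12 X \le \tfrac12 B \le B$; thus $\tfrac12 X$ is a common lower bound of $P$ and $B$ and the principle yields $X = 0$. Hence $(a_1, B) \in \Delta_1(\mathcal{A})$ with $(P, B) \le (a_1, B)$, and maximality forces $a_1 = P$, that is $P = P^{2}$. Therefore $P$ is a projection.

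The heart of the matter, and the only step requiring real care, is the choice of the enlargement $a_1$ in $(2) \Rightarrow (1)$. One needs a function $g$ with $g(t) > t$ on $(0,1)$ (to make the increase strict exactly when $P$ fails to be a projection) yet with $g(t) \le \lambda t$ near $0$, so that the squeeze $g(P) \le \lambda P$ holds and the common-lower-bound computation survives. The tempting choice $g(t) = \sqrt{t}$ fails precisely because $\sqrt{t}/t$ is unbounded at $0$, which is where disjointness from $B$ is most fragile; the polynomial choice $g(t) = 2t - t^{2}$, giving $a_1 = 2P - P^{2} \le 2P$, is the simplest that satisfies both constraints and makes the argument go through in an arbitrary unital $C^{*}$-algebra.
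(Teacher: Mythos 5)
Your proof is correct, and for the substantive direction it takes a genuinely different route from the paper's. (Note first that, exactly like the paper's own argument, you read the order on the second coordinate componentwise, i.e.\ $(a_1,a_2)\le(b_1,b_2)$ iff $a_1\le b_1$ \emph{and} $a_2\le b_2$; the reversed second coordinate in the stated definition must be a typo, since under the literal order $(P,0)$ would strictly dominate $(P,I-P)$ and no pair with nonzero second entry could ever be maximal.) For $(1)\Rightarrow(2)$ you and the paper both use the pair $(P,I-P)$: the paper checks disjointness and maximality by direct computation with $PAP$ and $(I-P)A(I-P)$, while you reduce both checks to the common-lower-bound principle and to the cited remark on infima of projections --- same substance, slightly cleaner packaging. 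The real divergence is in $(2)\Rightarrow(1)$: the paper enlarges $(P,B)$ to $(P^{1/2},B^{1/2})$ and asserts, without justification, that this pair still lies in $\Delta_1(\mathcal{A})$; that disjointness survives the square root is precisely the nontrivial content of the cited lemma of Mori, and it cannot be obtained by your squeeze since $P^{1/2}$ is not dominated by any multiple of $P$. Your replacement $a_1=2P-P^2$ sidesteps exactly this point: from $a_1\le 2P$ the disjointness $a_1\wedge B=0$ follows in one line from $P\wedge B=0$, and maximality still forces $P=P^2$. Your version is therefore self-contained where the paper's is not, at the negligible cost of enlarging only the first coordinate; your closing remark correctly diagnoses why $\sqrt{t}$ is the problematic choice.
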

	
	\begin{proof}
		(1) $\Rightarrow$ (2). Suppose that $P$ is a projection. Consider the pair $(P, I-P)$. Let $A \in \mathcal{A}^{+}$ such that $A \leq P$ and $A \leq I-P$. Then $(I-P) A (I-P) \leq (I-P) P (I-P) = 0$. Similarly, $PAP = 0$  which implies $A = (I-P)AP + PA(I-P)$, but since $(I-P)A=A$, and $PA =A$, then $A = 2A$, so $A=0$.
		\par 
		Suppose $(P, I-P) \leq (A, B)$ where $(A, B) \in \Delta_1(\mathcal{A})$. Since $P \leq A \leq I$, then $AP = P$. Thus, $(I-P)AP = 0 $ and $PA(I-P) = 0$. Since $A = P + (I-P)A(I-P)$, then $(I-P)A(I-P) \leq A$ and $(I-P)A(I-P) \leq (I-P) \leq B $ which implies that $(I-P)A(I-P) = 0$. Therefore, $A=P$, and, similarly, it can be concluded that $B = I-P$.
		\par
		(2) $\Rightarrow$ (1). If $(P, B)$ is maximal, then $(P, B) = (P^{1/2}, B^{1/2})$ which implies that $\text{sp}(P) \subseteq \lbrace 0, 1 \rbrace$, so it must be a projection.
	\end{proof}
	
	\begin{remark}\label{phi-maps-projections-to-projections}
		 Let $\mathcal{A}, \mathcal{B}$ be unital $C^{*}$-algebras and let $\psi: \mathcal{A}^{+} \rightarrow \mathcal{B}^{+}$ be a surjective order isomorphism such that $\psi(tI) = tI$, then $(A, B)$ is maximal in $\Delta_t(\mathcal{A})$ iff $(\psi(A), \psi(B))$ is maximal in $\Delta_t(\mathcal{B})$. In particular, $P \in P(\mathcal{A})$ iff $\psi(P) \in P(\mathcal{B})$ by Lemma \ref{mori-lemma}.
	\end{remark}

	\begin{lem}\label{psi-positive-homogenous}
		Let $\mathcal{A}, \mathcal{B}$ be unital $C^{*}$-algebras. If $\psi: \mathcal{A}^{+} \rightarrow \mathcal{B}^{+}$ is a surjective order isomorphism such that $\psi(tI) = tI$ for all $t > 0$, then for any projection $P$ in $\mathcal{A}$,  $\psi(t P) =t \psi(P)$ for all $t > 0.$ 
	\end{lem}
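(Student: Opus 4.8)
The plan is to fix a projection $P \in P(\mathcal{A})$, set $Q := \psi(P)$ (which is again a projection by Remark \ref{phi-maps-projections-to-projections}), and prove $\psi(tP) = tQ$ for every $t > 0$. The whole argument rests on the fact that an order isomorphism automatically preserves every infimum that happens to exist, together with the elementary corner observation that whenever $0 \le X \le tP$ one has $X = PXP$ (indeed $(I-P)X(I-P) \le t(I-P)P(I-P) = 0$ forces $X(I-P) = 0$). I would first dispose of the regime $0 < t \le 1$. Here I claim $tP = P \wedge tI$ in $\mathcal{A}^+$: it is visibly a lower bound, and if $X \le P$ and $X \le tI$ then $X = PXP \le tP$ by the corner observation. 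Applying $\psi$ and using $\psi(tI) = tI$ gives $\psi(tP) = Q \wedge tI$, and the same computation performed in $\mathcal{B}^+$ shows $Q \wedge tI = tQ$, so $\psi(tP) = tQ$.

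The hard part will be the regime $t \ge 1$, where this infimum trick breaks down: since now $tP \ge P$, intersecting with scalars only yields $\psi(tP) \wedge sI = sQ$ for $s \le 1$, and many positive elements share that property, so this alone cannot pin down $\psi(tP)$. To overcome this I would use the maximal-pair machinery to first recover the \emph{shape} of $\psi(tP)$. Rescaling $\Delta_t$ by $1/t$ turns Lemma \ref{mori-lemma} into the statement that the first coordinate of a maximal pair in $\Delta_t(\mathcal{A})$ is exactly $t$ times a projection. Since $(P, I-P)$ is maximal in $\Delta_1(\mathcal{A})$, the pair $(tP, t(I-P))$ is maximal in $\Delta_t(\mathcal{A})$; by Remark \ref{phi-maps-projections-to-projections} its image $(\psi(tP), \psi(t(I-P)))$ is maximal in $\Delta_t(\mathcal{B})$, whence $\psi(tP) = tR$ for some projection $R \in P(\mathcal{B})$.

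It then remains to identify $R$ with $Q$. For $t \ge 1$ I would compute $tP \wedge I = P$ in $\mathcal{A}^+$: indeed $P$ is a lower bound, and any $X$ with $X \le tP$ and $X \le I$ satisfies $X = PXP \le P$ by the corner observation. Applying $\psi$ and inserting $\psi(tP) = tR$ gives $Q = \psi(P) = \psi(tP \wedge I) = tR \wedge I = R$, the last equality holding because $t \ge 1$ and $R$ is a projection. Hence $\psi(tP) = tR = tQ$, which together with the first paragraph finishes the lemma. The only genuine subtlety to keep track of is how the two regimes interact with the sign of $\min(t,1)$, since that is precisely what decides whether one intersects $P$ with $tI$ or instead $tP$ with $I$.
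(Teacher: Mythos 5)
Your argument is correct, and it departs from the paper's proof in a way worth noting. The paper runs everything through the maximal-pair machinery: it shows $(P_1,P_2)$ is maximal in $\Delta_1(\mathcal{A})$ iff $(tP_1,tP_2)$ is maximal in $\Delta_t(\mathcal{A})$, deduces that $t^{-1}\psi(tP)$ is a projection, and then identifies it with $\psi(P)$ by comparing ranges of the two projections and invoking maximality again, treating $t>1$ and $t<1$ symmetrically. You keep only the first half of that: for $t\ge 1$ you use the same rescaled maximality (via Lemma \ref{mori-lemma} and Remark \ref{phi-maps-projections-to-projections}) to get $\psi(tP)=tR$ with $R$ a projection, but you replace the range-comparison step by the explicit infimum computation $tP\wedge I=P$, and for $0<t\le 1$ you bypass maximality entirely via $P\wedge tI=tP$, using only that a surjective order isomorphism preserves whatever infima exist and fixes the scalars. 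Both infimum identities are correctly justified by your corner observation ($0\le X\le tP$ forces $(I-P)X(I-P)=0$, hence $X=PXP$), and the identification $Q\wedge tI=tQ$, resp.\ $tR\wedge I=R$, on the $\mathcal{B}$ side is the same computation. What your route buys is a cleaner and more self-contained endgame: the paper's final step compares $\psi(P)$ with $t^{-1}\psi(tP)$ through range inclusions of projections and a maximality appeal that is stated somewhat loosely, whereas your $\wedge$-computations are elementary and verifiable line by line; the cost is that you still cannot avoid the maximal-pair lemma for $t\ge 1$, since (as you correctly observe) knowing $\psi(tP)\wedge sI$ for small $s$ does not pin down $\psi(tP)$ without first knowing it is a scalar multiple of a projection.
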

	
	\begin{proof}
		Let $\Delta_t(\mathcal{A})$
		be endowed with the same order relation as specified for $\Delta_1(\mathcal{A})$.
		We claim that  $(P_1, P_2)$ is maximal in $\Delta_1(\mathcal{A})$ if and only if $(tP_1, tP_2)$ is maximal in $\Delta_t(\mathcal{A})$. Let $(P_1, P_2)$ be maximal in $\Delta_1(\mathcal{A})$. That $P_1 \wedge P_2 = 0$ $\iff$ $tP_1 \wedge tP_2 = 0$ is clear. Suppose $(A, B) \in \Delta_t(\mathcal{A})$ such that $(tP_1, tP_2) \leq (A, B)$, then $(P_1, P_2) \leq (t^{-1}A, t^{-1}B  )$ and $(t^{-1}A, t^{-1}B  ) \in \Delta_1(\mathcal{A})$ which by maximality of $(P_1, P_2)$ implies that $(P_1, P_2) = (t^{-1}A, t^{-1}B  )$, so $(tP_1, tP_2) = (A, B)$. Similar arguments are used to prove the reverse implication.
		\par 
		 Let $t > 0$, since $(P, (I-P))$ is maximal in $\Delta_1(\mathcal{A})$, then $(tP, t(I-P))$ is maximal in $\Delta_t(\mathcal{A})$ which implies that $(\psi(tP), \psi(t(I-P)))$ is maximal in $\Delta_{t}(\mathcal{B})$ by Remark $\ref{phi-maps-projections-to-projections}$, then $(t^{-1}\psi(tP), t^{-1}\psi(t(I-P)))$ is maximal in $\Delta_{1}(\mathcal{B})$, so $(t^{-1}\psi(tP), t^{-1}\psi(t(I-P))) \in P(\mathcal{B}) \times P(\mathcal{B})$.
		 \par
		  If $t > 1$, then $(\psi(P), \psi(I-P))  \leq (\psi(tP), \psi(t(I-P)))$. Since $\text{rng}(\psi(P)) \subseteq \text{rng}(\psi(tP)) = \text{rng}(t^{-1}\psi(tP))$ and $\text{rng}(\psi(I-P)) \subseteq \text{rng}(t^{-1}\psi(t(I-P)))$, then $(\psi(P), \psi(I-P))  \leq (t^{-1}\psi(tP), t^{-1}\psi(t(I-P)))$, so by maximality $\psi(P) = t^{-1} \psi(tP)$. 
		  \par
		  If $t < 1$, then 
		 $(\psi(tP), \psi(t(I-P)))  \leq (\psi(P), \psi(I-P))$. Since $\text{rng}(t^{-1}\psi(tP)) =\text{rng}(\psi(tP)) \subseteq \text{rng}(\psi(P))$ and $\text{rng}(t^{-1}\psi(t(I-P))) \subseteq \text{rng}(\psi(I-P))$, then $ (t^{-1}\psi(tP), t^{-1}\psi(t(I-P)) \leq (\psi(P), \psi(I-P))$, so by maximality $\psi(P) = t^{-1} \psi(tP)$.

	\end{proof}

	Let $\epsilon_1 < \epsilon_2$ and consider maps $\psi_{\epsilon_1}, \psi_{\epsilon_2}$ of the type stated in Lemma $\ref{psi-lemma}$. Then
	\[  \phi(tP + \epsilon_1I) \leq \phi(tP + \epsilon_2I), \quad \forall \,   P \in P(\mathcal{A}), \, t > 0. \]
	By Lemma \ref{psi-positive-homogenous},
	\[ t\psi_{\epsilon_1}(P) + \epsilon_1I \leq t \psi_{\epsilon_2}(P) + \epsilon_2I, \quad \forall \, P \in P(\mathcal{A}), \, t > 0. \]
	As $t \rightarrow \infty$,  
	\begin{equation}\label{psi-family-projection-inequality}
		\psi_{\epsilon_1}(P)  \leq \psi_{\epsilon_2}(P), \quad \forall \, P \in P(\mathcal{A}).
	\end{equation}
	
	Moreover, we claim that $\psi_{\epsilon_1} (P) = \psi_{\epsilon_2}(P)$. Let $Q = \psi_{\epsilon_2}(P) - \psi_{\epsilon_1}(P)$, then $Q$ is a projection which is orthogonal to $\psi_{\epsilon_1}(P)$. Furthermore, $\psi_{\epsilon_2}^{-1}(Q) = R  \leq P$, which implies that $\psi_{\epsilon_1}(R) \leq \psi_{\epsilon_1}(P)$. By Remark \ref{phi-maps-projections-to-projections}, $R$ is a projection, so  (\ref{psi-family-projection-inequality}) implies $\psi_{\epsilon_1}(R) \leq \psi_{\epsilon_2}(R) = Q$ which implies that $ \psi_{\epsilon_1}(R) = 0$. Since $\psi_{\epsilon}(0) = 0$ for any $ \epsilon > 0$, then $Q = 0$, and the claim is proved.

	Subsequently,
	\[ \mynorm{A \sigma \big( P + \epsilon I \big)} = \mynorm{\phi(A) \sigma \big( \psi_{\epsilon}(P) + \epsilon I \big)},  \quad \forall \,  \epsilon > 0. \]
	Since $\psi_{\epsilon}(P) = Q$ for all $\epsilon > 0$, then
 	as $\epsilon \rightarrow 0$
	\begin{equation*}
		\mynorm{A \sigma P} = \mynorm{\phi(A) \sigma Q}
	\end{equation*}
		by \cite[Remark 1 (i)]{curt-emanuel-first-paper}. This implies that the above can be written as
	\begin{equation}\label{key-equation}
		\mynorm{A \sigma P} = \mynorm{\phi(A) \sigma \psi_\epsilon(P)}, \quad \forall \,  \epsilon > 0.
	\end{equation}
	Furthermore, if $A = Q + \delta I$ for some $Q \in P(\mathcal{A})$ and $\delta > 0$, then 
	\begin{equation}\label{key-equation-v2}
		\mynorm{ \big( Q + \delta I \big) \sigma P} = \mynorm{ \big( \psi_{\epsilon}(Q) + \delta I \big) \sigma  \psi_{\epsilon}(P)}, \quad \forall \,  \epsilon > 0.
	\end{equation}
	
	\section{Results}

	%	AW star are baer rings and in turn are rickart rings and these are unital by proposition 2 chapter 3
	\begin{thm}\label{MAIN}
		Let $\mathcal{A}, \mathcal{B}$ be $AW^{*}$-algebras. A surjective map $\phi: \mathcal{A}^{++} \rightarrow \mathcal{B}^{++}$ preserves the norm of a symmetric Kubo-Ando mean $\sigma$ if and only if there is a Jordan $\ast$-isomorphism $J : \mathcal{A} \rightarrow \mathcal{B}$ which extends $\phi$, i.e. $\phi(A) = J(A)$ holds for all $A \in \mathcal{A}^{++}$.
	\end{thm}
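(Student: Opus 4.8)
The plan is to prove the two implications separately, the forward (``only if'') direction carrying essentially all of the work. For the easy direction, suppose a Jordan $*$-isomorphism $J:\mathcal A\to\mathcal B$ extends $\phi$. Since $J$ preserves the Jordan product it preserves squares and symmetric products $XYX = 2X\circ(X\circ Y)-X^{2}\circ Y$, hence the continuous functional calculus of self-adjoint elements and, in particular, square roots and inverses of positive invertibles. Writing $A\sigma B = A^{\sss{\frac12}}f(A^{\sss{-\frac12}}BA^{\sss{-\frac12}})A^{\sss{\frac12}}$ with $f$ the representing function of $\sigma$ (equivalently, using the integral representation \eqref{kubo-ando-integral-representation} together with the symmetry $\alpha=\beta$), one checks $J(A\sigma B)=J(A)\sigma J(B)$ for all $A,B\in\mathcal A^{++}$; since a Jordan $*$-isomorphism is isometric, $\mynorm{A\sigma B}=\mynorm{J(A\sigma B)}=\mynorm{\phi(A)\sigma\phi(B)}$, so $\phi$ preserves the norm of $\sigma$.

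For the converse I would start from the structure already assembled in Section 2: by \eqref{phi-order-isomorphism} and \eqref{phi-norm-preserving}, $\phi$ is a surjective norm-preserving order isomorphism, and by Lemmas \ref{psi-lemma}, \ref{psi-positive-homogenous} and the discussion around \eqref{key-equation} it extends to order isomorphisms $\psi_\epsilon$ of the semi-definite cones fixing the scalars, whose common restriction $\psi:=\psi_\epsilon|_{P(\mathcal A)}$ is, by Remark \ref{phi-maps-projections-to-projections}, an order isomorphism of $P(\mathcal A)$ onto $P(\mathcal B)$ that is positively homogeneous. The first genuine step is to upgrade $\psi$ to an \emph{orthoisomorphism}. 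Since $(P,I-P)$ is maximal in $\Delta_1(\mathcal A)$ by Lemma \ref{mori-lemma}, Remark \ref{phi-maps-projections-to-projections} shows $(\psi(P),\psi(I-P))$ is maximal in $\Delta_1(\mathcal B)$ with $\psi(I-P)$ a projection; as the maximal pairs are precisely the orthocomplementary pairs $(P,I-P)$, this forces $\psi(I-P)=I-\psi(P)$. Hence $P\perp Q\iff P\le I-Q\iff\psi(P)\le I-\psi(Q)\iff\psi(P)\perp\psi(Q)$, so $\psi$ is a lattice orthoisomorphism of $P(\mathcal A)$ onto $P(\mathcal B)$.

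Next I would extend $\psi$ from projections to a Jordan $*$-isomorphism. Because $AW^{*}$-algebras are generated by their projections, a Dye-type extension theorem applies: an orthoisomorphism of their projection lattices extends (uniquely) to a Jordan $*$-isomorphism $J:\mathcal A\to\mathcal B$ with $J|_{P(\mathcal A)}=\psi$. I expect \textbf{this step to be the main obstacle}, since the classical Dye argument breaks down on a type $I_2$ direct summand (and must be run separately on the abelian summand, where Stone duality for the complete Boolean algebra of projections suffices). On the $I_2$ part the lattice orthoisomorphism alone does not determine a linear map, and one must feed in the extra rigidity coming from the preserved mean-norms — namely the identities \eqref{key-equation} and \eqref{key-equation-v2}, which record $\mynorm{(Q+\delta I)\sigma P}=\mynorm{(\psi(Q)+\delta I)\sigma\psi(P)}$ — to control $\psi$ on non-commuting projections and exclude the pathological orthoisomorphisms of the $I_2$ summand.

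Finally I would verify that the Jordan $*$-isomorphism $J$ so obtained actually extends $\phi$, i.e. $J(A)=\phi(A)$ for every $A\in\mathcal A^{++}$. Fix such an $A$. Since $J(P)=\psi(P)=\psi_\epsilon(P)$ for every projection $P$, identity \eqref{key-equation} gives $\mynorm{A\sigma P}=\mynorm{\phi(A)\sigma J(P)}$, while the forward computation $J(A\sigma P)=J(A)\sigma J(P)$ (which persists on the semi-definite cone by joint monotonicity and continuity of the mean) together with $J$ isometric gives $\mynorm{A\sigma P}=\mynorm{J(A)\sigma J(P)}$. As $P$ runs over $P(\mathcal A)$, $J(P)$ runs over all of $P(\mathcal B)$, so $\mynorm{\phi(A)\sigma R}=\mynorm{J(A)\sigma R}$ for every projection $R\in P(\mathcal B)$. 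Approximating an arbitrary $Y\in\mathcal B^{++}$ by positive combinations of its spectral projections and using continuity of the mean as in \cite[Remark 1]{curt-emanuel-first-paper} upgrades this to $\mynorm{\phi(A)\sigma Y}=\mynorm{J(A)\sigma Y}$ for all $Y\in\mathcal B^{++}$; the order characterization \eqref{phi-order-isomorphism}, resting on \cite[Theorem 6]{curt-emanuel-first-paper}, then yields both $\phi(A)\le J(A)$ and $J(A)\le\phi(A)$, whence $\phi(A)=J(A)$. This identification completes the proof.
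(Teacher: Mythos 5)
Your easy direction is fine, and your preparatory material (the order isomorphism, the maps $\psi_\epsilon$, projections to projections, positive homogeneity on projections) matches the paper's Section 2. But the core of your converse rests on three steps that do not hold up. First, your claim that the maximal pairs of $\Delta_1(\mathcal A)$ are \emph{precisely} the orthocomplementary pairs $(P,I-P)$ is false: in $M_2(\C)$ two distinct non-orthogonal rank-one projections $P,B$ satisfy $P\wedge B=0$, and the pair $(P,B)$ is maximal (any $(A,B')$ above it has $P\le A$, so $A\in\{P,I\}$, and $A=I$ forces $B'=0$), yet $B\neq I-P$. Lemma \ref{mori-lemma} only tells you that the coordinates of a maximal pair are projections; it does not give $\psi(I-P)=I-\psi(P)$, so your orthoisomorphism of the projection lattices is not established. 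The paper proves orthogonality preservation only where it is needed (the case $f(0+)>0$) and does so from the norm identity \eqref{key-equation-v2} together with the integral decomposition of $\sigma$ through its representing measure, arriving at $\mynorm{\psi_{\epsilon}(P)+\psi_{\epsilon}(Q)}\le 1$ and hence orthogonality of the image projections.

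Second, the Dye-type extension is exactly where you admit the main obstacle lies, and you do not resolve it: ``feeding in the extra rigidity of the mean-norms'' on a type $I_2$ summand is a programme, not an argument, and Dye's theorem itself requires a nontrivial $AW^{*}$ version. The paper sidesteps Dye entirely by splitting on the value $f(0+)$: when $f(0+)=0$ it shows $\phi$ itself is positive homogeneous (reducing to the geometric mean via Proposition \ref{molnar-function-at-zero} and the projection-based order test of Proposition \ref{geometric-mean-od-awstar-algebra}) and then quotes Theorem \ref{jordan-extension-for-positive-homogenous}; when $f(0+)>0$ it quotes \cite[Lemma 2.3]{dong-molnar-wong} for norm- and orthogonality-preserving order isomorphisms of the positive semi-definite cones. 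Third, your final verification is broken: knowing $\mynorm{\phi(A)\sigma R}=\mynorm{J(A)\sigma R}$ for every projection $R$ does not upgrade to all $Y\in\mathcal B^{++}$ by ``approximating $Y$ by positive combinations of its spectral projections,'' because $\sigma$ is not additive and $\mynorm{X\sigma Y}$ is not determined by the values $\mynorm{X\sigma P_i}$ on spectral projections of $Y$; a projection-only order test is available precisely when $f(0+)=0$ (Proposition \ref{geometric-mean-od-awstar-algebra}), whereas the characterization \eqref{phi-order-isomorphism} you invoke requires testing against all of $\mathcal B^{++}$.
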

	
Sufficiency is trivial.  We prove that if $\phi: \mathcal{A}^{++} \rightarrow \mathcal{B}^{++}$ is a surjective map 
satisfying $\Vert A\sigma B\Vert=\Vert \phi(A)\,\sigma\,\phi(B)\Vert$ for every $A,B\in\mathcal B^{++}(H)$, then there is a Jordan $\ast$-isomorphism $J : \mathcal{A} \rightarrow \mathcal{B}$ satisfying $\phi(A) = J(A)$ for all $A \in \mathcal{A}^{++}$.  The proof shall be split into two cases depending on the behaviour of the representation function $f$ at $0$. \\

\emph{Case 1: $f(0+)=0$}.  For this case we make use of the following characterization of positive homogeneous order isomorphisms\footnote{i.e. $\phi(tA) = t \phi(A)$ for $A \in \mathcal{A}^{++}$ and $t > 0$. }.

\begin{thm}{\cite[Theorem 13]{molnar-quantum}}\label{jordan-extension-for-positive-homogenous}
		Let $\mathcal{A}$, $\mathcal{B}$ be unital $C^{*}$-algebras. The map $\phi: \mathcal{A}^{++} \rightarrow \mathcal{B}^{++}$ is a surjective positive homogenous order isomorphism if and only if it is of the form 
		\begin{equation}\label{jordan-isomorphim-representation-quantum-renyi}
			\phi(A) = C J(A) C, \quad \forall \, A \in \mathcal{A}^{++}
		\end{equation}
		where $C \in \mathcal{B}^{++}$ and $J: \mathcal{A} \rightarrow \mathcal{B}$ is a Jordan $*$-isomorphism.
	\end{thm}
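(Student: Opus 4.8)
Sufficiency is a routine verification: a Jordan $*$-isomorphism $J$ is a unital positive linear bijection with positive inverse, hence an order isomorphism $\mathcal{A}^{++}\to\mathcal{B}^{++}$, and conjugation $X\mapsto CXC$ by the invertible $C\in\mathcal{B}^{++}$ is a positive-homogeneous order automorphism of $\mathcal{B}^{++}$; their composite $\phi=CJ(\cdot)C$ is therefore a surjective positive-homogeneous order isomorphism. For the converse I would first normalise. Put $C:=\phi(I)^{1/2}\in\mathcal{B}^{++}$ and define $\psi:=C^{-1}\phi(\cdot)C^{-1}$. Since conjugation by $C^{-1}$ is itself a positive-homogeneous order automorphism of $\mathcal{B}^{++}$, the map $\psi:\mathcal{A}^{++}\to\mathcal{B}^{++}$ is again a surjective positive-homogeneous order isomorphism, and it is now unital: $\psi(I)=C^{-1}\phi(I)C^{-1}=I$. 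It suffices to prove that $\psi$ is the restriction to $\mathcal{A}^{++}$ of a Jordan $*$-isomorphism $J$, for then $\phi(A)=C\psi(A)C=CJ(A)C$.

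The engine of the argument is that a unital positive-homogeneous order isomorphism is rigid for the Thompson metric. For $A,B\in\mathcal{A}^{++}$ set $M(A,B):=\inf\{t>0:A\leq tB\}$. Homogeneity together with order preservation gives $A\leq tB\iff\psi(A)\leq t\psi(B)$, whence $M(\psi(A),\psi(B))=M(A,B)$. Consequently $\psi$ preserves the Thompson metric $d_T(A,B)=\log\max\{M(A,B),M(B,A)\}$ and is a surjective Thompson isometry. The relevance of this is that the Thompson geometry encodes the geometric mean: the curve $s\mapsto A^{1/2}(A^{-1/2}BA^{-1/2})^{s}A^{1/2}$ is a metric geodesic from $A$ to $B$ whose midpoint is $A\sharp B$, and one expects $\psi$ to respect this distinguished midpoint, so that $\psi(A\sharp B)=\psi(A)\sharp\psi(B)$ and, taking $B=A^{-1}$ and using $A\sharp A^{-1}=I=\psi(I)$, that $\psi(A^{-1})=\psi(A)^{-1}$.

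Granting geometric-mean preservation, linearity would be extracted as follows. On a maximal abelian $*$-subalgebra $\mathcal{M}$ one has $\exp(a)\sharp\exp(b)=\exp(\tfrac{a+b}{2})$ for commuting self-adjoint $a,b$, so the conjugated map $\Psi:=\log\circ\,\psi\circ\exp$ is a midpoint-preserving injection fixing $0$ on each commutative self-adjoint subspace, hence real-linear there by the Mazur--Ulam principle. Promoting these local linearities to a single real-linear order isomorphism $J:\mathcal{A}_{sa}\to\mathcal{B}_{sa}$ --- that is, establishing additivity $\Psi(a+b)=\Psi(a)+\Psi(b)$ for non-commuting $a,b$ --- is the decisive step. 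Once $\psi$ is known to be the restriction of a unital real-linear order isomorphism, it extends complex-linearly to a unital bijection $\mathcal{A}\to\mathcal{B}$ that is positive with positive inverse; such a map is an isometry for the order-unit norm, which agrees with the $C^{*}$-norm on self-adjoint elements, so Kadison's isometry theorem identifies it as a Jordan $*$-isomorphism $J$. Undoing the normalisation yields $\phi=CJ(\cdot)C$.

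The main obstacle is exactly the globalisation just flagged, together with the fact that the Thompson metric is not uniquely geodesic, so that the passage from "surjective Thompson isometry" to "preserves the geometric mean" is not automatic from midpoint-preservation alone. I see two ways to close the gap. The first is to invoke the classification of surjective Thompson isometries of the positive definite cone, which are precisely the maps $A\mapsto CJ(A)C$ and $A\mapsto CJ(A^{-1})C$ with $J$ a Jordan $*$-isomorphism; degree-$+1$ homogeneity excludes the inverse branch, and $\psi(I)=I$ forces $C=I$, giving $\psi=J$ directly. The second is to prove geometric-mean preservation and then additivity by hand, characterising $A\sharp B$ and the sum order-theoretically through the invariants $M(\cdot,\cdot)$ that $\psi$ preserves and then appealing to Kadison. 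The first route is short but defers the difficulty to another structural theorem; the second is self-contained but is where the genuine work lies.
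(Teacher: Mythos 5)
This theorem is not proved in the paper at all: it is imported verbatim from Moln\'ar (\cite[Theorem 13]{molnar-quantum}), so there is no in-paper argument to compare against. Your proposal is nevertheless essentially the argument of the cited source: one normalises to a unital map $\psi$, observes that a unital positive-homogeneous order isomorphism preserves $M(A,B)=\inf\{t>0:A\le tB\}$ and is therefore a surjective Thompson isometry, and then invokes the Hatori--Moln\'ar classification of such isometries, with homogeneity ruling out the inversion part. So your first route is the right one and is complete modulo one misstatement: the Hatori--Moln\'ar theorem does not say that surjective Thompson isometries are of the two pure forms $A\mapsto CJ(A)C$ or $A\mapsto CJ(A^{-1})C$; the general form mixes the two branches through a central projection $p\in\mathcal{B}$, namely $A\mapsto C\bigl(pJ(A)+(I-p)J(A)^{-1}\bigr)C$ (a direct sum $\mathcal{B}_1\oplus\mathcal{B}_2$ with the identity on one summand and inversion on the other already escapes both pure forms). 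The correction costs nothing: applying degree-$(+1)$ homogeneity forces $t^{-1}(I-p)J(A)^{-1}=t(I-p)J(A)^{-1}$ for all $t>0$, hence $p=I$; then $\psi(I)=C^{2}=I$ with $C\in\mathcal{B}^{++}$ gives $C=I$ and $\psi=J$, exactly as you conclude. Your second, self-contained route is honestly flagged as incomplete, and indeed the two obstacles you name are real: the Thompson metric is far from uniquely geodesic, so surjective isometries need not preserve the distinguished midpoint $A\sharp B$ without further argument, and the globalisation of the commutative Mazur--Ulam linearity to additivity on non-commuting elements is the genuine content that the Hatori--Moln\'ar theorem packages. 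Since route 1 closes the proof, the proposal stands.
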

	It is clear in this theorem that if $\phi(I)=I$ then $C=I$.  We further recall the following  characterisation of Kubo-Ando connections with representing function $f$ satisfying $f(0+) = 0$.
	
	\begin{prop}{\cite[Lemma 2]{molnar-od}}\label{molnar-function-at-zero}
		Let $f:(0, \infty)  \rightarrow (0, \infty) $ be a non-trivial (i.e. not affine) operator monotone  function satisfying $f(0+)=0$ and let $\sigma$ denote the Kubo-Ando connection associated to $f$.  For  $A\in \mathcal B(H)^{++}$ and non-zero projection $P\in \mathcal B(H)$ 
		\[\mynorm{A\sigma P}=f^\circ\left(\frac{1}{\max\{\lambda\ge0:\lambda P\le PA^{-1}P\}}\right).\]
	\end{prop}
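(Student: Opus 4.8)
The plan is to reduce the norm to the range $PH$ of $P$, to evaluate there the parallel sums occurring in the integral representation of $\sigma$ by a Schur-complement computation, and finally to recognise the outcome as $f^\circ$ of a single positive operator. Since $f(0+)=m(\{0\})=0$, the representation (\ref{kubo-ando-integral-representation}) loses its first term and reads
\[A\sigma P=\beta P+\int_{(0,\infty)}\frac{1+t}{t}\,(tA:P)\,{\rm d}m(t),\qquad \beta=m(\{\infty\})=f^\circ(0+).\]

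First I would check that $A\sigma P$ is supported on $PH$, i.e. $A\sigma P=P(A\sigma P)P$. For each $t>0$ the variational (Anderson--Duffin) description of the parallel sum, $\myinnerproduct{(tA:P)\xi}{\xi}=\inf\{\,t\myinnerproduct{A\eta}{\eta}+\myinnerproduct{P\zeta}{\zeta}:\eta+\zeta=\xi\,\}$, gives value $0$ for $\xi\perp PH$ (take $\zeta=\xi$, $\eta=0$); as $tA:P\ge0$, its range lies in $PH$. The same is true of $\beta P$, and the finiteness of $m$ together with $0\le tA:P\le P$ lets me interchange the compression by $P$ with the integral. Hence $\mynorm{A\sigma P}$ is the norm of the compression of $A\sigma P$ to $PH$.

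Next I would compute each compressed parallel sum. As $tA$ is invertible, so is $tA+P$, and $tA:P=(tA)(tA+P)^{-1}P$. Writing $A$ in block form relative to $H=PH\oplus(PH)^{\perp}$ and using the Schur-complement formula for the $(1,1)$-entry of $(tA+P)^{-1}$, the compression of $tA:P$ to $PH$ works out to $t\Sigma(I+t\Sigma)^{-1}$, where $\Sigma:=A_{11}-A_{12}A_{22}^{-1}A_{21}$ is the Schur complement of the $(PH)^{\perp}$-corner of $A$. The block-inverse identity $(A^{-1})_{11}=\Sigma^{-1}$ identifies $\Sigma$ as the inverse of the compression of $A^{-1}$ to $PH$; this is bounded and invertible there because $A^{-1}\ge\mynorm{A}^{-1}I$. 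Substituting back,
\[A\sigma P=\beta I_{PH}+\int_{(0,\infty)}(1+t)\,\Sigma(I+t\Sigma)^{-1}\,{\rm d}m(t)\quad\text{on }PH.\]

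On the other hand, transposing the representation (\ref{e4}) gives the scalar identity $f^\circ(x)=\beta+\int_{(0,\infty)}\frac{(1+t)x}{1+tx}\,{\rm d}m(t)$, and applying the continuous functional calculus of the positive operator $\Sigma$ to it reproduces precisely the previous display; hence $A\sigma P=f^\circ(\Sigma)$ on $PH$. Since $f^\circ$ is continuous and non-decreasing and $\Sigma\ge0$, taking norms yields $\mynorm{A\sigma P}=\mynorm{f^\circ(\Sigma)}=f^\circ(\mynorm{\Sigma})$, and $\mynorm{\Sigma}$ is the reciprocal of the least spectral value of the compression of $A^{-1}$ to $PH$, i.e. $\mynorm{\Sigma}=1/\max\{\lambda\ge0:\lambda P\le PA^{-1}P\}$, which is the asserted value. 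I expect the main difficulty to be the bookkeeping forced by the non-invertibility of $P$: one may not write $tA:P$ as $((tA)^{-1}+P^{-1})^{-1}$, so the computation must pass through $(tA)(tA+P)^{-1}P$ and the restriction to $PH$, and one has to justify carefully both the support claim and the interchange of the compression with the ${\rm d}m$-integral. The identity coupling the two sides is the Schur-complement relation $\Sigma=\big(\text{compression of }A^{-1}\text{ to }PH\big)^{-1}$, which is exactly what turns the operator-side integral into $f^\circ$ evaluated at $\Sigma$.
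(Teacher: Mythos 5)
The paper offers no proof of this proposition: it is imported verbatim from M\'oln\'ar's \cite[Lemma 2]{molnar-od}, so there is no in-paper argument to compare yours against. Judged on its own, your proof is correct and self-contained, and it reconstructs what is essentially the standard route: show $A\sigma P=P(A\sigma P)P$, identify the compression with $f^\circ$ of the inverse (taken in the corner $P\mathcal B(H)P$) of $PA^{-1}P$, and then use monotonicity of $f^\circ$ to pass to norms. All the key steps check out: the support claim follows from the Anderson--Duffin variational formula exactly as you say; writing $tA:P=P-P(tA+P)^{-1}P$ and applying the block-inverse formula does give the compression $t\Sigma(I+t\Sigma)^{-1}$ with $\Sigma=A_{11}-A_{12}A_{22}^{-1}A_{21}=\big((A^{-1})_{11}\big)^{-1}$; the transposed integral representation $f^\circ(x)=\beta+\int_{(0,\infty)}\frac{(1+t)x}{1+tx}\,{\rm d}m(t)$ is right (it is the identity $\frac{(1/x)(1+t)}{(1/x)+t}=\frac{1+t}{1+tx}$); and the interchanges of the ${\rm d}m$-integral with compression and with the functional calculus are justified because $0\le\frac{1+t}{t}(tA:P)\le\min\{(1+t)A,\frac{1+t}{t}P\}$ is uniformly bounded and $m$ is finite (Tonelli on the spectral measure handles the second interchange). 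Two small points worth a sentence each in a final write-up: the invertibility of $A_{22}$ on $(PH)^{\perp}$ (immediate from $A\ge\mynorm{A^{-1}}^{-1}I$) and the degenerate case $P=I$, where $\Sigma=A$ and the formula reduces to $\mynorm{A\sigma I}=f^\circ(\mynorm{A})$. Incidentally, your argument never uses the non-triviality hypothesis on $f$: when $f(x)=\beta x$ the formula still holds with $f^\circ\equiv\beta$, so that assumption is carried along from \cite{molnar-od} for other purposes rather than being needed here.
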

	
	Since the geometric mean is a symmetric mean with representation function $f$ such that $f(t) = t^{1/2}$,  then
	\begin{equation}\label{geometric-mean-max-pap}
		\mynorm{A \# P}^{2} = \frac{1}{\max\{\lambda\ge0:\lambda P\le PA^{-1}P\}}.
	\end{equation}
	Therefore, 
	\begin{equation}\label{kubo-ando-geometric-relationship}
		\mynorm{A \sigma P} = f^{\circ} ( \mynorm{A \# P}^{2}).
	\end{equation}

	 We shall also require the following result.

	\begin{prop}{\cite[Lemma 11]{chabbabi-main}}\label{geometric-mean-od-awstar-algebra}
		Let $\mathcal{A}$ be an $AW^{*}$-algebra and $\sigma$ a symmetric Kubo-Ando connection with corresponding representation function $f $such that $f(0+)=0$. Suppose $A, B \in \mathcal{A}^{++}$, then
		\[ A \leq B \iff \mynorm{A \sigma P} \leq \mynorm{B \sigma P}, \quad \forall \, P \in P(AW^{*}(I, A^{-1}-B^{-1})). \ \]
	\end{prop}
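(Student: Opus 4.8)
The forward implication requires no work and no restriction on $P$: if $A\le B$ then axiom (i) (monotonicity of the connection) gives $A\sigma P\le B\sigma P$ for every projection $P$, so $\mynorm{A\sigma P}\le\mynorm{B\sigma P}$. For the converse the plan is first to transfer the statement to the geometric mean. Because $\sigma$ is symmetric we have $f=f^{\circ}$, so \eqref{kubo-ando-geometric-relationship} becomes $\mynorm{A\sigma P}=f(\mynorm{A\#P}^{2})$. For a symmetric mean the hypothesis $f(0+)=0$ forces both endpoint masses $\alpha=\beta$ in the integral representation \eqref{kubo-ando-integral-representation} to vanish, so $f$ is not affine; being a non-trivial operator monotone function it is real-analytic and strictly increasing on $(0,\infty)$. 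Hence $\mynorm{A\sigma P}\le\mynorm{B\sigma P}$ if and only if $\mynorm{A\#P}\le\mynorm{B\#P}$, and it suffices to prove $A\le B\iff\mynorm{A\#P}\le\mynorm{B\#P}$ for all $P\in P(AW^{*}(I,A^{-1}-B^{-1}))$.

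Next I would rewrite the geometric-mean inequality spectrally. For a nonzero projection $P$ put $m_{X}(P):=\max\{\lambda\ge0:\lambda P\le PX^{-1}P\}$, so that by \eqref{geometric-mean-max-pap} one has $\mynorm{X\#P}^{2}=m_{X}(P)^{-1}$; concretely $m_{X}(P)$ is the bottom of the spectrum of the compression $PX^{-1}P$ computed in the corner $P\mathcal{A}P$. Thus $\mynorm{A\#P}\le\mynorm{B\#P}$ is equivalent to $m_{A}(P)\ge m_{B}(P)$, that is, to $\min\text{sp}(PA^{-1}P)\ge\min\text{sp}(PB^{-1}P)$ in $P\mathcal{A}P$.

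I would then establish the converse by contraposition. Assume $A\not\le B$. Since $A\le B\iff B^{-1}\le A^{-1}$, the self-adjoint element $h:=A^{-1}-B^{-1}$ fails to be positive, so $\min\text{sp}(h)<0$ and we may choose $\delta>0$ for which the spectral projection $P$ of $h$ onto $(-\infty,-\delta]$ is nonzero. Here the $AW^{*}$ hypothesis is used: the abelian $AW^{*}$-subalgebra $AW^{*}(I,h)$ is a monotone complete $C(X)$ with $X$ Stonean, and Kaplansky's spectral theory supplies such a projection inside it, so $P\in P(AW^{*}(I,A^{-1}-B^{-1}))$ (explicitly $P=\chi_{\overline{\{h<-\delta\}}}$, the closure being clopen by extremal disconnectedness). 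For this $P$ we have $PhP\le-\delta P$, i.e. $PA^{-1}P\le PB^{-1}P-\delta P$; passing to bottoms of spectra in $P\mathcal{A}P$ yields $m_{A}(P)\le m_{B}(P)-\delta<m_{B}(P)$. This contradicts $m_{A}(P)\ge m_{B}(P)$, so the hypothesis of the converse cannot hold while $A\not\le B$, which is what contraposition requires.

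The step I expect to be the crux is the $AW^{*}$-algebraic input just invoked: that $h$ genuinely admits a spectral projection lying in the generated abelian $AW^{*}$-subalgebra, and that $m_{X}(P)$ coincides with the bottom of the compressed spectrum in $P\mathcal{A}P$. In a von Neumann algebra these are immediate from Borel functional calculus, but in the $AW^{*}$ setting they rest on the order-theoretic (rather than SOT) completeness of the projection lattice and on Kaplansky's spectral resolution of self-adjoint elements; the remaining ingredients—monotonicity for $\Rightarrow$, the reduction to $\#$, and the equivalence $A\le B\iff B^{-1}\le A^{-1}$—are purely formal.
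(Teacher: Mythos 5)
Your proposal is correct and follows essentially the same route as the paper's proof: reduce to the geometric mean via the identity $\mynorm{A\sigma P}=f^{\circ}(\mynorm{A\# P}^{2})$ and the injectivity of $f$, then, assuming $A\not\le B$, use the $C(X)$ representation (with $X$ extremally disconnected) of the commutative $AW^{*}$-subalgebra generated by $I$ and $A^{-1}-B^{-1}$ to produce a spectral projection $P$ of the negative part satisfying $PA^{-1}P+\epsilon P\le PB^{-1}P$, which forces $\mynorm{B\sigma P}<\mynorm{A\sigma P}$. The only differences are cosmetic: you phrase the compression step via the bottom of the spectrum in the corner $P\mathcal{A}P$ where the paper works directly with $\max\{\lambda\ge 0:\lambda P\le PX^{-1}P\}$, and you spell out the (trivial) forward implication, which the paper omits.
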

	\begin{proof}
		Let $T = A^{-1}- B^{-1}$  and consider $AW^{*}(I, T)$. It must be first recalled that  any commutative $AW^{*}$-algebra is algebra $\ast$-isomorphic to some $C(X)$ where $X$ is compact, Hausdorff, and extremally disconnected$\footnote{An extremally disconnected set is a set such that the closure of every open set is open.}$ \cite[Theorem 1 Section 7]{berberian}. \par
	    Therefore, let $f_T$ be the corresponding function of $T$ in $C(X)$. If $T$ is not positive then the spectrum $\sigma(T)$ contains some negative number, so there is some $\epsilon$ such that $\sigma(T) \cap ]-\infty, -\epsilon[  \neq \emptyset$. Consider the projection $P_\epsilon$ associated with $\overline{f_{T}^{-1}(-\infty, -\epsilon)}$. Since $f_T ( \overline{f_{T}^{-1}(-\infty, -\epsilon)} ) \subseteq \overline{f_T ({f_{T}^{-1}(-\infty, -\epsilon)} )} \subseteq (-\infty, -\epsilon]$, then  $P_\epsilon T P_\epsilon \leq -\epsilon P_{\epsilon} $, so $P_\epsilon A^{-1} P_\epsilon + \epsilon P_{\epsilon} \leq P_\epsilon B^{-1} P_\epsilon $. Therefore, for $\lambda > 0$
	    \[  \lambda P_\epsilon \leq P_\epsilon A^{-1} P_\epsilon \implies (\lambda + \epsilon)P_\epsilon \leq P_\epsilon B^{-1} P_\epsilon.  \]
	    
	     Furthermore, since $\mynorm{A \# P} \neq 0$ for any  $P \in P(\mathcal{A})$, then by (\ref{geometric-mean-max-pap}), (\ref{kubo-ando-geometric-relationship}), and the injectivity of $f$, it can be concluded that $\mynorm{B \sigma P_{\epsilon}} < \mynorm{A \sigma P_{\epsilon}}$ which is a contradiction.
	\end{proof}
	
Now we complete the proof for the case when $f(0+)=0$ by showing that the map $\phi$ of Theorem \ref{MAIN} is positive homogeneous.
		
\begin{proof}
		Let $A \in \mathcal{A}^{++}$, $P \in P(\mathcal{A})$, and $ \epsilon > 0$, then by  (\ref{key-equation}) 
		\[  \mynorm{A \sigma P } = \mynorm{ \phi(A) \sigma  \psi_{\epsilon}(P)}. \]
	 By (\ref{kubo-ando-geometric-relationship}) and the fact that $f$ is injective,
		\begin{equation}\label{positive-homogeonous-p1}
			\mynorm{A \# P} = \mynorm{ \phi(A) \# \psi_{\epsilon}(P)}.
		\end{equation}
	 Let $t > 0$, 
	 \begin{equation}\label{positive-homogeonous-p2}
	 	\mynorm{tA \# P} = t^{1/2}\mynorm{A \# P} = t^{1/2}\mynorm{\phi(A)  \# \psi_{\epsilon}(P)} = \mynorm{t\phi(A) \# \psi_{\epsilon}(P) }.
	 \end{equation}
	By (\ref{positive-homogeonous-p1}) and (\ref{positive-homogeonous-p2}), 
	\[ \mynorm{ \phi(tA) \# \psi_{\epsilon}(P)} = \mynorm{t\phi(A) \# \psi_{\epsilon}(P) }. \]
	By Proposition \ref{geometric-mean-od-awstar-algebra}, we can then conclude that $t\phi(A) = \phi(tA)$ which proves that $\phi$ is positive homogenous. 
	\end{proof}

	\emph{Case 2: $f(0+)>0$}. We will show that $\psi_{\epsilon}$ is orthogonality preserving (in both directions) for any $\epsilon > 0$\footnote{i.e. if $A, B \in \mathcal{A}^{+}$, then $AB = 0 \iff \psi_{\epsilon}(A) \psi_{\epsilon}(B) =0$}. Furthermore, \cite[Lemma 2.3]{dong-molnar-wong} will allow us to characterize surjective maps which are orthogonality and norm preserving order isomorphisms:
	
	\begin{lem}{\cite[Lemma 2.3]{dong-molnar-wong}}
	Let $\mathcal{A}, \mathcal{B}$ be $C^{*}$-algebras such that at least one is unital. Let $\psi_{\epsilon} : \mathcal{A}^{+} \rightarrow \mathcal{B}^{+}$ be a surjective order isomorphism such that $\psi_{\epsilon}$ is norm and orthogonality preserving, then $\psi_{\epsilon}$ extends to a Jordan $\ast$-isomorphism $J: \mathcal{A} \rightarrow \mathcal{B}$.
	\end{lem}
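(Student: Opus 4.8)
The plan is to transfer the problem to the projection lattices, produce the Jordan $*$-isomorphism there by a Dye-type extension theorem, and then check that this isomorphism already coincides with $\psi_\epsilon$ on the whole positive cone by a spectral argument. I would first record that $\psi_\epsilon(tI)=tI$ for all $t>0$: for $A\in\mathcal A^{+}$ one has $A\leq tI\iff\mynorm{A}\leq t$, so $tI$ is the largest element of norm $t$ in $\mathcal A^{+}$, and this extremal description is preserved by the norm-preserving order isomorphism $\psi_\epsilon$. Consequently, by Remark \ref{phi-maps-projections-to-projections}, $\psi_\epsilon$ restricts to a bijection $P(\mathcal A)\to P(\mathcal B)$; being an order isomorphism it is an isomorphism of the (complete) projection lattices, and by hypothesis it also preserves orthogonality. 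Since $P^{\perp}=I-P$ is the largest projection orthogonal to $P$, preservation of order together with orthogonality forces $\psi_\epsilon(I-P)=I-\psi_\epsilon(P)$, so $\psi_\epsilon|_{P(\mathcal A)}$ is an orthoisomorphism of the projection ortholattices.

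Next I would invoke a Dye-type theorem in the $AW^{*}$ setting relevant to Theorem \ref{MAIN}: an orthoisomorphism between the projection lattices of $AW^{*}$-algebras extends to a Jordan $*$-isomorphism $J:\mathcal A\to\mathcal B$ with $J|_{P(\mathcal A)}=\psi_\epsilon|_{P(\mathcal A)}$. The delicate point is the classical type $I_2$ exception to Dye's theorem, where the projection ortholattice admits orthoautomorphisms that are not induced by Jordan maps; this is exactly where the \emph{extra} data — that $\psi_\epsilon$ is defined and norm-preserving on all of $\mathcal A^{+}$, not merely on $P(\mathcal A)$ — must be fed in to rule out the non-Jordan candidates. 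I expect this to be the main obstacle.

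Finally I would show $\psi_\epsilon=J$ on all of $\mathcal A^{+}$. Put $\Theta:=J^{-1}\circ\psi_\epsilon$; since $J$ agrees with $\psi_\epsilon$ on projections, $\Theta$ is a surjective norm-preserving order automorphism of $\mathcal A^{+}$ fixing every projection, with $\Theta(tI)=tI$. By Lemma \ref{psi-positive-homogenous}, $\Theta(tP)=t\Theta(P)=tP$ for every projection $P$ and every $t>0$, so $\Theta$ fixes every scalar multiple of a projection. The key observation is that in an $AW^{*}$-algebra the spectral projections of $A\in\mathcal A^{+}$ admit the purely order-theoretic description
\[ \chi_{(s,\infty)}(A)=\sup\{\,Q\in P(\mathcal A): tQ\leq A \text{ for some } t>s\,\}, \]
the supremum being taken in the complete lattice $P(\mathcal A)$. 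As $\Theta$ is an order automorphism that fixes each $tQ$ and, restricted to $P(\mathcal A)$, preserves lattice suprema, it fixes $\chi_{(s,\infty)}(A)$ while simultaneously sending it to $\chi_{(s,\infty)}(\Theta(A))$; hence $\chi_{(s,\infty)}(\Theta(A))=\chi_{(s,\infty)}(A)$ for every $s$. Two positive elements with identical spectral resolutions coincide, so $\Theta(A)=\int_{0}^{\infty}\chi_{(s,\infty)}(A)\,{\rm d}s=A$. Thus $\Theta=\mathrm{id}$ and $\psi_\epsilon=J$ on $\mathcal A^{+}$, which is the desired Jordan $*$-isomorphism extension. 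The routine endpoint care in the spectral characterization (spectrum sitting exactly at $s$) is handled by the strict inequality $t>s$, and is not where the difficulty lies.
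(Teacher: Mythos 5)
First, a point of comparison: the paper does not prove this lemma at all --- it is imported verbatim from \cite[Lemma 2.3]{dong-molnar-wong} and used as a black box, so there is no internal proof to measure your argument against. Judged on its own terms, your strategy has a structural mismatch with the statement: the lemma is asserted for arbitrary $C^{*}$-algebras (only one of which need be unital), whereas your entire argument lives in the projection lattice. A $C^{*}$-algebra such as $C([0,1])$ contains only the projections $0$ and $I$, so the restriction $\psi_\epsilon|_{P(\mathcal A)}$ carries no information, no Dye-type extension theorem is available, and the spectral projections $\chi_{(s,\infty)}(A)$ need not lie in the algebra. Even if one retreats to the $AW^{*}$ setting of Theorem \ref{MAIN}, you explicitly leave the Dye step unresolved (the type $I_2$ exception, which you flag as ``the main obstacle'' without closing it), so the Jordan $*$-isomorphism $J$ is never actually produced. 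The proof in \cite{dong-molnar-wong} does not route through projection lattices, which is precisely why it covers general $C^{*}$-algebras.

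Second, the concluding spectral argument rests on a false identity. The claim
\[ \chi_{(s,\infty)}(A)=\sup\{\,Q\in P(\mathcal A): tQ\leq A \text{ for some } t>s\,\} \]
fails because $tQ\le A$ does not imply $Q\le\chi_{[t,\infty)}(A)$. In $M_2(\C)$ take $A=\mathrm{diag}(2,\tfrac12)$ and let $Q$ be the rank-one projection onto the span of $(\sqrt3,1)$; a direct computation gives $A-Q\ge0$, so $Q$ belongs to the right-hand set for $s=\tfrac9{10}$, $t=1$, yet $Q\neq\chi_{[1,\infty)}(A)=\mathrm{diag}(1,0)$. Hence the supremum on the right equals $I$ while $\chi_{(9/10,\infty)}(A)=\mathrm{diag}(1,0)$. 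Consequently the step $\chi_{(s,\infty)}(\Theta(A))=\chi_{(s,\infty)}(A)$, and with it the identification $\Theta=\mathrm{id}$ and $\psi_\epsilon=J$ on all of $\mathcal A^{+}$, does not follow as written. (The underlying fact --- that an order automorphism of $\mathcal A^{+}$ fixing every projection is the identity --- is salvageable in a von Neumann or $AW^{*}$-algebra, for instance by characterizing $\chi_{(s,\infty)}(A)$ as the range projection of $(A-sI)_{+}$, but that requires a different order-theoretic description than the one you use.) So the proposal, as it stands, has gaps at both the construction of $J$ and the verification that $J$ extends $\psi_\epsilon$.
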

	
	Before proceeding with the proof, let us recall \cite[Lemma 1]{curt-emanuel-first-paper}.
	
	\begin{lem}\label{kubo-ando-decomposition-at-zero}
		Let $f:(0, \infty)  \rightarrow (0, \infty) $ be an operator monotone  function and let $m$ denote the positive and finite Borel measure associated to $f$ via (\ref{e4}). For every Borel subset $\Delta$ of $[0,\infty]$ satisfying $m(\Delta)>0$, the function $f_\Delta$ defined on $(0,\infty)$  by
		\[f_\Delta:x\mapsto\int_{\Delta}\frac{x(1+t)}{x+t}\,{\rm d}m(t)\]
		is operator monotone.  In particular, if $m((0,\infty))\neq 0$, the function $h$ defined by
		\[h(x):=\int_{(0,\infty)}\frac{x(1+t)}{x+t}\,{\rm d}m(t)=f(x)-f(0+)-f^\circ(0+) x\quad(x>0)\]
		is operator monotone.  If $f$ is symmetric, then so is $h$.
	\end{lem}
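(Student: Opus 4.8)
The plan is to reduce everything to the single fact that, for each fixed $t\in[0,\infty]$, the kernel $k_t(x):=\frac{x(1+t)}{x+t}$ (read as the constant $1$ at $t=0$ and as the identity $x$ at $t=\infty$, by continuity) is operator monotone on $(0,\infty)$, and that operator monotonicity is inherited by integrals of such kernels against a positive measure. Granting this, $f_\Delta$ is precisely the function associated by L\"owner's Theorem to the restricted measure $m_\Delta:=m(\,\cdot\,\cap\Delta)$, which is again a positive finite Borel measure on $[0,\infty]$. Since $m(\Delta)>0$ this measure is nonzero, so $f_\Delta$ maps $(0,\infty)$ into $(0,\infty)$ and is operator monotone by the correspondence recalled after (\ref{e4}); indeed $m_\Delta$ determines, via (\ref{kubo-ando-integral-representation}) and \cite[Theorem 3.4]{kubo-ando-main}, a Kubo-Ando connection whose representing function is $f_\Delta$.

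To keep the ``integration preserves operator monotonicity'' step self-contained I would argue directly. First I would verify the kernel claim: at $t=0$ the kernel is the constant $1$ and at $t=\infty$ it is $x$, both operator monotone; for $t\in(0,\infty)$ one writes $k_t(x)=(1+t)\bigl(1-\tfrac{t}{x+t}\bigr)$ and uses that $x\mapsto(x+t)^{-1}$ is operator monotone decreasing, whence $k_t$ is operator monotone. Then, for $0<A\le B$ in a $C^{*}$-algebra and any vector $\xi$, I would pass to the scalar spectral measures of $A$ and $B$ and invoke Tonelli's theorem (the integrand is nonnegative and $m$ is finite) to interchange the two integrations:
\[\langle f_\Delta(A)\xi,\xi\rangle=\int_\Delta\langle k_t(A)\xi,\xi\rangle\,{\rm d}m(t)\le\int_\Delta\langle k_t(B)\xi,\xi\rangle\,{\rm d}m(t)=\langle f_\Delta(B)\xi,\xi\rangle,\]
the middle inequality holding termwise because each $k_t$ is operator monotone. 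As $\xi$ is arbitrary this yields $f_\Delta(A)\le f_\Delta(B)$.

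For the ``in particular'' clause, note that $h=f_{(0,\infty)}$, so the hypothesis $m((0,\infty))\neq0$ is exactly $m(\Delta)>0$ with $\Delta=(0,\infty)$, and operator monotonicity of $h$ is immediate from the first part. The displayed identity then follows by splitting the integral in (\ref{e4}) over $\{0\}$, $(0,\infty)$ and $\{\infty\}$ and using $f(0+)=m(\{0\})$ together with $f^\circ(0+)=m(\{\infty\})$. For the symmetry claim I would first record the pointwise identity $k_t^{\circ}(x)=x\,k_t(1/x)=\frac{x(1+t)}{1+tx}=k_{1/t}(x)$, which shows that the transpose of any $f_\Delta$ corresponds to the pushforward of $m_\Delta$ under the involution $\iota:t\mapsto1/t$ on $[0,\infty]$. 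Since $f=f^{\circ}$ forces $m=\iota_{*}m$ by uniqueness of the representing measure, and since $\iota$ maps $(0,\infty)$ bijectively onto itself, the restriction $m_{(0,\infty)}$ is again $\iota$-invariant; hence $h^{\circ}=h$, i.e. $h$ is symmetric.

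The only genuinely delicate point is the interchange of the functional calculus with the integration against $m$ in the second paragraph; everything else is bookkeeping. I expect this to go through cleanly via Tonelli's theorem once the operator inequality is tested against vector states, since the kernel is nonnegative and $m$ is finite. Alternatively one avoids the interchange altogether by quoting the measure-to-connection correspondence of \cite[Theorem 3.4]{kubo-ando-main} for $m_\Delta$, which directly exhibits $f_\Delta$ as a representing function and hence as operator monotone; in that route the main thing to pin down is simply that restricting the measure stays within the admissible class, which is clear.
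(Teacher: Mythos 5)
Your proof is correct. Note that the paper itself gives no argument for this lemma --- it is simply recalled from \cite[Lemma 1]{curt-emanuel-first-paper} --- so there is no in-paper proof to compare against; your route (identifying $f_\Delta$ as the L\"owner transform of the restricted measure $m(\cdot\cap\Delta)$, checking operator monotonicity of each kernel $k_t$ via $k_t(x)=(1+t)\bigl(1-\tfrac{t}{x+t}\bigr)$, justifying the interchange by Tonelli, and deducing symmetry of $h$ from the $\iota$-invariance of $m$ restricted to $(0,\infty)$, where $\iota(t)=1/t$) is exactly the standard argument one would expect that cited lemma to contain. The only point worth making fully explicit is the uniform bound $k_t(x)\le 1+x$ for all $t\in[0,\infty]$, which guarantees that $f_\Delta$ is finite-valued since $m$ is finite; with that, every step you outline goes through.
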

	
	Let us now prove Theorem \ref{MAIN} for the case when $f(0+)>0$.
	
	\begin{proof}
	Let $P, Q \in P(\mathcal{A})$ be two orthogonal projections, and consider $(Q + \delta I) \sigma P$ where $\delta > 0$. By (\ref{key-equation-v2}), the following equation is obtained
	\begin{equation}\label{beginning-equation-of-final-theorem}
		\mynorm{ \big(Q + \delta I\big) \sigma P} =  \mynorm{ \big(\psi_{\epsilon}(Q) + \delta I \big) \sigma \psi_{\epsilon}(P)}, \quad \forall \, \epsilon > 0.
	\end{equation}

	If $m((0,\infty))\neq 0$, by Lemma \ref{kubo-ando-decomposition-at-zero} we can denote $\sigma_h$ as the symmetric Kubo-Ando connection corresponding to $h(x) = f(x) - \alpha - \alpha x$ where $\alpha = f(0+)$. By (\ref{kubo-ando-integral-representation}), we can decompose $(Q + \delta I) \sigma P$ in the following way:
	\[  (Q + \delta I) \sigma P = \alpha(Q + \delta I + P) + (Q+\delta I) \sigma_h P.   \] 
	
	 Since $P(Q + \delta I)^{-1}P  =  P\big( (1+\delta)^{-1}Q + \delta^{-1}(I-Q)\big)P =   \delta^{-1}P $, then by Proposition \ref{molnar-function-at-zero} it can be concluded that
	\[   \mynorm{ \big(Q + \delta I\big) \sigma_h P} = h(\delta) \quad  \text{since} \quad \max\{\lambda\ge0:\lambda P\le P(Q+ \delta I)^{-1}P\} = \delta^{-1}. \]
	
	Therefore, 
	\begin{equation}\label{left-hand-side-for-final-theorem}
		\begin{split}
			\mynorm{(Q + \delta I) \sigma P} & = \mynorm{ \alpha(Q + \delta I + P) +  (Q + \delta I) \sigma_h P } \\ 
			& \leq \mynorm{ \alpha(Q + \delta I + P) +  \mynorm{(Q + \delta I) \sigma_h P} I } \\
			& \leq \mynorm{ \alpha(Q + \delta I + P) +  h(\delta) I }.
		\end{split}
	\end{equation}

	Furthermore, 
	\begin{equation}\label{right-hand-side-for-final-theorem}
		\begin{split}
		\mynorm{ \big(\psi_{\epsilon}(Q) + \delta I \big) \sigma \psi_{\epsilon}(P)} =& \mynorm{ \alpha \big(\psi_{\epsilon}(Q) + \delta I + \psi_{\epsilon}(P)\big) + (\psi_{\epsilon}(Q)+\delta I) \sigma_h \psi_{\epsilon}(P)} \\
		\geq& \mynorm{\alpha(\psi_{\epsilon}(Q) + \psi_{\epsilon}(P))} 
		\end{split}
	\end{equation}
	where the above follows because $  (\psi_{\epsilon}(Q)+\delta I) \sigma_h \psi_{\epsilon}(P)  $ is a positive operator. Therefore, by using the inequalities (\ref{left-hand-side-for-final-theorem}) and (\ref{right-hand-side-for-final-theorem}) in equation (\ref{beginning-equation-of-final-theorem}), 
	\[  \mynorm{ \alpha(Q + \delta I + P) + h(\delta) I} \geq \mynorm{\alpha(\psi_{\epsilon}(Q) + \psi_{\epsilon}(P))}.  \]
	By letting $\delta \rightarrow 0$ and using the fact that $h(0+) = 0$, it can be concluded that
	\begin{equation}\label{psi-keeps-projections-orthogonal}
		1 \geq \mynorm{Q + P} \geq \mynorm{\psi_{\epsilon}(Q) + \psi_{\epsilon}(P)}.
	\end{equation}

	Since by Remark \ref{phi-maps-projections-to-projections} $\psi_{\epsilon}(Q)$ and $\psi_{\epsilon}(P)$ are projections, then they are orthogonal to each other. Moreover, if $ m((0,\infty))= 0$, then $\sigma$ is the arithmetic mean and it is clear that (\ref{psi-keeps-projections-orthogonal}) holds.
	\par
	Let $A, B \in \mathcal{A}^{+}$ be such that $AB = 0$ and $\max \lbrace \mynorm{A}, \mynorm{B} \rbrace = t$. Furthermore, denote by $P_A$ and $P_B$ the range projections corresponding to $A$ and $B$. The range projections are elements of $P(\mathcal{A})$ by \cite[Theorem 7]{frank-aw-algebras}. Since $\text{img}(B) \subseteq \text{ker}(A)$, then $ A P_B = 0$; similarly, $P_A P_B = 0$. Since these are orthogonal, then $\psi_{\epsilon}(P_A) \psi_{\epsilon}(P_B) = 0$. Furthermore, $A \leq t P_{A}$, so $\psi_{\epsilon}(A) \leq t\psi_{\epsilon}(P_A)$ by Lemma $\ref{psi-positive-homogenous}$. Thus, $\text{rng}(\psi_{\epsilon}(A)) \subseteq \text{rng}(\psi_{\epsilon}(P_A)) \subseteq \text{ker}(\psi_{\epsilon}(P_B)) \subseteq \text{ker}(\psi_{\epsilon}(B)) $, so $\psi_{\epsilon}(A) \psi_{\epsilon}(B)  = 0$. Thus, each $\psi_{\epsilon}$ extends to a Jordan $*$-isomorphism.
	\par 
	Let $A \in \mathcal{A}^{+}$, and $\epsilon_2 > \epsilon_1$, then 
	\begin{equation*}
		\begin{split}
			\psi_{\epsilon_2}(A)  &= \phi \big( (A + (\epsilon_2 - \epsilon_1) I)  + \epsilon_1 I \big) - \epsilon_2 I \\
			&= \psi_{\epsilon_1} \big( (A + (\epsilon_2 - \epsilon_1) I) \big) + \epsilon_1 I - \epsilon_2 I \\ 
			& = \psi_{\epsilon_1}(A)
		\end{split}
	\end{equation*}
		using the fact that $\psi_{\epsilon_1}$ is linear and unital.
	Therefore, the family of maps $ \lbrace  \psi_{\epsilon} \rbrace_{\epsilon > 0}$ is just one map $\psi$ which extends to a Jordan $*$-isomorphism. Let $A \in \mathcal{A}^{++}$ be such that $A \geq \epsilon I$, then
	\[ \psi(A) = \psi(A - \epsilon I) + \psi(\epsilon I) =  \phi(A), \]
	 so $\phi$ extends to a Jordan $\ast$-isomorphism.
	\end{proof}

	\bibliographystyle{plain}
	\bibliography{kubo_ando_paper_2_for_publishing}

\end{document}